\newtheorem{theorem}{Theorem}[section]
\newtheorem{lemma}[theorem]{Lemma}
\newtheorem{proposition}[theorem]{Proposition}
\theoremstyle{definition}
\theoremstyle{remark}
\numberwithin{equation}{section}
\mathchardef\hyphen="2D
\begin{document}
\title{ON MEAN ERGODIC CONVERGENCE IN THE CALKIN ALGEBRAS}
\author{MARCH T.~BOEDIHARDJO}
\address{DEPARTMENT OF MATHEMATICS, TEXAS A\&M UNIVERSITY, COLLEGE STATION, TEXAS 77843}
\curraddr{}
\email{march@math.tamu.edu}
\thanks{The first author was supported in part by the N.~W.~Naugle Fellowship and the A.~G. \& M.~E.~Owen Chair in the Department of Mathematics.}
\author{WILLIAM B.~JOHNSON}
\address{DEPARTMENT OF MATHEMATICS, TEXAS A\&M UNIVERSITY, COLLEGE STATION, TEXAS 77843}
\curraddr{}
\email{johnson@math.tamu.edu}
\thanks{The second author was supported in part by NSF DMS-1301604.}
\subjclass[2010]{Primary 46B08, 47A35, 47B07.}
\date{}
\commby{Thomas Schlumprecht}
\keywords{Mean ergodic convergence, Calkin algebra, essential maximality, essential norm, compact approximation property.}
\begin{abstract}
In this paper, we give a geometric characterization of mean ergodic convergence in the Calkin algebras for Banach spaces that have the bounded compact approximation
property.
\end{abstract}

\maketitle
\section{Introduction}
Let $X$ be a real or complex Banach space and let $B(X)$ be the algebra of all bounded linear operators on $X$. Suppose that $T\in B(X)$ and consider the sequence
\[M_{n}(T):=\frac{I+T+\ldots+T^{n}}{n+1},\quad n\geq 1.\]
In \cite{Dunford}, Dunford considered the norm convergence of $(M_{n}(T))_{n}$ and established the following characterizations.
\begin{theorem}\label{thm1.1}
Suppose that $X$ is a complex Banach space and that $T\in B(X)$ satisfies $\frac{\|T^{n}\|}{n}\to 0$. Then the following conditions are equivalent.
\begin{enumerate}
\item $(M_{n}(T))_{n}$ converges in norm to an element in $B(X)$.
\item $1$ is a simple pole of the resolvent of $T$ or $1$ is in the resolvent set of $T$.
\item $(I-T)^{2}$ has closed range.
\end{enumerate}
\end{theorem}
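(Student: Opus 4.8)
The plan is to prove $(1)\Rightarrow(3)\Rightarrow(2)\Rightarrow(1)$; write $N:=I-T$ throughout. Three preliminary observations come from the hypothesis $\|T^{n}\|/n\to 0$. (i) Since $N$ commutes with each $M_{n}(T)$ and $NM_{n}(T)=\frac{I-T^{n+1}}{n+1}$, we have $NM_{n}(T)=M_{n}(T)N\to 0$ in operator norm. (ii) $\ker N=\ker N^{2}$: if $N^{2}x=0$ then, with $y:=Nx$, one has $Ty=y$ and by induction $T^{n}x=x-ny$, so $\|T^{n}x\|/n\ge\|y\|-\|x\|/n\to\|y\|$, forcing $y=0$. (iii) $r(T)\le 1$, so $\sigma(N)=1-\sigma(T)$ lies in the closed disc $D$ of radius $1$ centred at $1$; since $0$ lies on the boundary of $D$, if $0\in\sigma(N)$ then $0\in\partial\sigma(N)\subseteq\sigma_{\mathrm{ap}}(N)$, i.e.\ $N$ fails to be bounded below.

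For $(2)\Rightarrow(1)$: if $1\in\rho(T)$ then $M_{n}(T)=N^{-1}\frac{I-T^{n+1}}{n+1}\to 0$; if $1$ is a simple pole of the resolvent, let $P$ be the Riesz projection for $\{1\}$, so that $X=PX\oplus(I-P)X$ with $(I-T)P=0$ (simplicity of the pole) and $1\in\rho(T|_{(I-P)X})$, giving $M_{n}(T)\to P\oplus 0=P$. For $(1)\Rightarrow(3)$: suppose $M_{n}(T)\to E$ in norm. From (i), $NE=EN=0$. Since $M_{m}(T)T=M_{m}(T)+\frac{T^{m+1}-I}{m+1}\to E$, we get $M_{m}(T)T^{k}\to E$ for every $k$, hence $EM_{n}(T)=\lim_{m}M_{m}(T)M_{n}(T)=\lim_{m}\frac1{n+1}\sum_{k=0}^{n}M_{m}(T)T^{k}=E$, and therefore $E^{2}=\lim_{n}EM_{n}(T)=E$. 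Thus $E$ is a projection commuting with $T$, $EX=\ker N$, $T$ is the identity on $EX$, and on the complementary $T$-invariant subspace $Y:=\ker E$ one has $M_{n}(T|_{Y})\to 0$. The algebraic identity $N\sum_{j=0}^{n}(j+1)M_{j}(T)=(n+2)(I-M_{n+1}(T))$, applied to $T|_{Y}$, exhibits $\frac1{n+2}\sum_{j=0}^{n}(j+1)M_{j}(T|_{Y})$ as an operator commuting with $N|_{Y}$ whose product with $N|_{Y}$ equals $I-M_{n+1}(T|_{Y})$, which is invertible for large $n$; hence $N|_{Y}$ is invertible on $Y$. Consequently $\operatorname{ran}N=\operatorname{ran}N^{2}=Y$ is closed, which is $(3)$ (and $X=\ker N\oplus Y$ with $1\in\rho(T|_{Y})$ already gives $(2)$).

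The heart of the matter is $(3)\Rightarrow(2)$. First a lemma: if $\ker N=\ker N^{2}$ and $\operatorname{ran}N^{2}$ is closed, then $\operatorname{ran}N$ is closed. With $K:=\ker N=\ker N^{2}$, the injective operator $X/K\to X$ induced by $N^{2}$ has closed range, hence is bounded below, so $\|N^{2}x\|\ge c\operatorname{dist}(x,K)$ for some $c>0$; as $\|N^{2}x\|\le\|N\|\,\|Nx\|$, the operator $X/K\to X$ induced by $N$ is also bounded below, so $\operatorname{ran}N$ is closed. Now assume $(3)$. By (ii) and the lemma, $\operatorname{ran}N$ is closed; if $0\in\rho(N)$ we have $(2)$, so suppose $0\in\sigma(N)$. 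By (iii), $N$ is not bounded below, so, having closed range, $\ker N\ne\{0\}$; with (ii) this says the ascent of $N$ is $1$. Next, $R:=\operatorname{ran}N$ is $N$-invariant, $N|_{R}$ is injective (since $\ker N\cap\operatorname{ran}N=\{0\}$, again by (ii)) with range $\operatorname{ran}N^{2}$ closed, hence bounded below; since $\sigma(N|_{R})\subseteq\sigma(N)\subseteq D$ (the resolvent of $N$ commutes with $N$, hence preserves $R$) and a bounded-below operator cannot have $0$ on its spectral boundary, $0\in\rho(N|_{R})$, i.e.\ $\operatorname{ran}N^{2}=\operatorname{ran}N$. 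Finally $\operatorname{ran}N\ne X$, for if $N$ were surjective with $\ker N\ne\{0\}$ then $N^{*}$ would be bounded below but not surjective, impossible since $0\in\partial\sigma(N^{*})=\partial\sigma(N)$. Hence the descent of $N$ is $1$ as well; ascent $=$ descent $=1$ means $0$ is a simple pole of the resolvent of $N$, i.e.\ $1$ is a simple pole of the resolvent of $T$, giving $(2)$.

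The step I expect to be the main obstacle is $(3)\Rightarrow(2)$: one has to upgrade closedness of $\operatorname{ran}(I-T)^{2}$ to closedness of $\operatorname{ran}(I-T)$ (the lemma) and then produce finite — in fact unit — ascent and descent. It is here that the full force of $\|T^{n}\|/n\to 0$ enters, through the spectral constraint $\sigma(I-T)\subseteq D$: this is exactly what prevents $I-T$, its restriction to its range, or its adjoint from being bounded below without being invertible, and hence what converts ``closed range'' into the pole structure. Apart from this, the argument only uses the standard facts that $\partial\sigma(S)\subseteq\sigma_{\mathrm{ap}}(S)$, that an injective operator with closed range is bounded below, and that $\lambda$ is a pole of order $m$ of a resolvent precisely when the ascent and descent of $\lambda I$ minus the operator both equal $m$, together with the associated direct-sum decomposition of $X$.
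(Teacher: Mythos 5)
The paper offers no proof of Theorem \ref{thm1.1} to compare yours with: it is quoted as Dunford's theorem from \cite{Dunford}, and the paper only ever uses it (and its refinement, Theorem \ref{thm1.2}) as a black box. Judged on its own, your proof is correct and complete. The cycle $(1)\Rightarrow(3)\Rightarrow(2)\Rightarrow(1)$ works; your three preliminary consequences of $\|T^{n}\|/n\to 0$ are exactly the right ones (the telescoping $\, (I-T)M_{n}(T)=\frac{I-T^{n+1}}{n+1}\to 0$, the ascent bound $\ker(I-T)=\ker(I-T)^{2}$, and $\sigma(I-T)\subseteq\{|z-1|\le 1\}$ so that $0$, if in the spectrum, lies on its boundary and hence in the approximate point spectrum); the identity $(I-T)\sum_{j=0}^{n}(j+1)M_{j}(T)=(n+2)(I-M_{n+1}(T))$ is verified correctly and does give invertibility of $I-T$ on $\ker E$; and the delicate step $(3)\Rightarrow(2)$ — upgrading closedness of the range of $(I-T)^{2}$ to closedness of the range of $I-T$, then to range$(I-T)=$ range$(I-T)^{2}\ne X$, i.e.\ ascent $=$ descent $=1$ — is handled correctly, with the spectral-boundary argument doing the work in each of the three places where ``bounded below but not invertible'' must be excluded. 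For context: this ascent/descent route is essentially the one taken in the later literature (Lin \cite{Lin} and Mbekhta--Zem\'anek \cite{Mbekhta}, whose Theorem \ref{thm1.2} is what the paper actually invokes), rather than Dunford's original operational-calculus treatment via the Laurent expansion of the resolvent at $1$; your version has the advantage of isolating the only two uses of the hypothesis $\|T^{n}\|/n\to0$ and of making the equivalence with the closed-range condition the centerpiece, which is the form the present paper transports to the Calkin algebra.
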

It was then discovered by Lin in \cite{Lin} that $I-T$ having closed range is also an equivalent condition. Moreover, Lin's argument worked also for real Banach spaces.
This result was later improved by Mbekhta and Zem\'anek in \cite{Mbekhta} in which they showed that $(I-T)^{m}$ having closed range, where $m\geq 1$, are also equivalent
conditions. More precisely,
\begin{theorem}\label{thm1.2}
Let $m\geq 1$. Suppose that $X$ is a real or complex Banach space and that $T\in B(X)$ satisfies $\frac{\|T^{n}\|}{n}\to 0$. Then the sequence $(M_{n}(T))_{n}$ converges
in norm to an element in $B(X)$ if and only if $(I-T)^{m}$ has closed range.
\end{theorem}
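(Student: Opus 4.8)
The plan is to reduce the statement to the case $m=1$, which is the result of Lin quoted above: for real or complex $X$ with $\|T^{n}\|/n\to0$, the sequence $(M_{n}(T))_{n}$ converges in norm precisely when $\operatorname{Ran}(I-T)$ is closed. Hence it suffices to prove that, under the hypothesis $\|T^{n}\|/n\to0$, the range of $(I-T)^{m}$ is closed if and only if the range of $I-T$ is closed. Write $S=I-T$. First I would dispose of the kernel side: $\ker S^{j}=\ker S$ for every $j\ge1$. Indeed, if $S^{2}x=0$, put $y=Sx$; then $Sy=0$, so the binomial expansion gives $T^{n}x=(I-S)^{n}x=x-ny$, whence $\|y\|=\lim_{n}\|T^{n}x\|/n\le\|x\|\,\lim_{n}\|T^{n}\|/n=0$, and induction finishes it. In particular $\ker S\cap\operatorname{Ran}(S^{j})=\{0\}$ for all $j$.

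For the implication ``$\operatorname{Ran}(S^{m})$ closed $\Rightarrow\operatorname{Ran}(S)$ closed'' I would pass to a quotient. The subspace $R:=\operatorname{Ran}(S^{m})$ is closed and invariant under $T$, so $T$ induces a bounded operator $\bar T$ on the Banach space $X/R$ with $\|\bar T^{n}\|\le\|T^{n}\|$, hence $\|\bar T^{n}\|/n\to0$, while $\bar S:=I-\bar T$ satisfies $\bar S^{m}=0$. If $\bar S\neq0$, let $p$ be the largest integer with $\bar S^{p}\neq0$, so $1\le p\le m-1$. Then for $n\ge p$ one has $\bar T^{n}=\sum_{j=0}^{p}\binom{n}{j}(-\bar S)^{j}$, and the triangle inequality gives $\|\bar T^{n}\|\ge\binom{n}{p}\|\bar S^{p}\|-O(n^{p-1})$, which is of exact order $n^{p}$ with $p\ge1$, so $\|\bar T^{n}\|/n$ does not tend to $0$ --- a contradiction. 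Hence $\bar S=0$, that is, $\operatorname{Ran}(S)\subseteq R$, and since also $R\subseteq\operatorname{Ran}(S)$ we obtain $\operatorname{Ran}(S)=\operatorname{Ran}(S^{m})$, which is closed.

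For the reverse implication I would argue by induction on $m$ (the case $m=1$ being vacuous): assuming $\operatorname{Ran}(S^{m-1})$ is closed, I show $\operatorname{Ran}(S^{m})$ is closed. The operator $S$ carries the Banach space $\operatorname{Ran}(S^{m-1})$ injectively (by the first paragraph) onto $\operatorname{Ran}(S^{m})$, so it is enough to show it is bounded below there. If it is not, pick $y_{n}=S^{m-1}x_{n}$ with $\|y_{n}\|=1$ and $\|Sy_{n}\|\to0$, and --- applying the open mapping theorem to $S^{m-1}\colon X\to\operatorname{Ran}(S^{m-1})$ --- with $\|x_{n}\|\le C$. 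For each fixed $j\ge m$, $\|S^{j}x_{n}\|=\|S^{\,j-m+1}y_{n}\|\to0$ as $n\to\infty$; meanwhile in $T^{k}x_{n}=\sum_{j=0}^{k}\binom{k}{j}(-S)^{j}x_{n}$ the term $j=m-1$ equals $(-1)^{m-1}\binom{k}{m-1}y_{n}$, of norm $\binom{k}{m-1}$, the terms with $j<m-1$ have total norm $O(k^{m-2})$, and the terms with $j\ge m$ have norm tending to $0$ as $n\to\infty$ for $k$ fixed. Letting $n\to\infty$ with $k$ fixed yields $\liminf_{n}\|T^{k}x_{n}\|\ge c'k^{m-1}$ for large $k$, whence $\|T^{k}\|\ge(c'/C)k^{m-1}$ and $\|T^{k}\|/k$ does not tend to $0$ once $m\ge2$ --- a contradiction. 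Combining the two implications with Lin's result proves the theorem; one even obtains $\operatorname{Ran}(I-T)=\operatorname{Ran}((I-T)^{m})$ whenever $(M_{n}(T))_{n}$ converges.

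I expect the main obstacle to be this last induction step. The delicate point is to extract a genuine polynomial-in-$k$ lower bound for $\|T^{k}\|$ from the mere failure of closedness of the range; this forces one to control the binomial tail $\sum_{j\ge m}\binom{k}{j}(-S)^{j}x_{n}$ --- an exponentially large (in $k$) number of terms, each only known to be small when $n$ is large --- tightly enough that it becomes negligible in the limit $n\to\infty$ for every fixed $k$, before one may let $k\to\infty$. The quotient argument of the second paragraph, by contrast, is clean because there the relevant operator is genuinely nilpotent.
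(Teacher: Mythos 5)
The paper does not prove this statement at all: Theorem \ref{thm1.2} is quoted as a known result of Mbekhta and Zem\'anek \cite{Mbekhta}, just as the $m=1$ case is quoted as Lin's theorem \cite{Lin}. So there is no in-paper argument to compare against; what you have written is a self-contained reduction of the general case to the quoted $m=1$ case, and I believe it is correct. Your two implications are both sound. For ``$\mathrm{Ran}(S^m)$ closed $\Rightarrow \mathrm{Ran}(S)$ closed'' the passage to the quotient $X/\mathrm{Ran}(S^m)$ is clean: $\bar S$ is genuinely nilpotent there, and if $\bar S^p\neq 0$ with $p\geq 1$ maximal, the reverse triangle inequality on $\bar T^n=\sum_{j=0}^{p}\binom{n}{j}(-\bar S)^j$ forces $\|\bar T^n\|/n\not\to 0$, contradicting $\|\bar T^n\|\leq\|T^n\|$; this even yields the stronger conclusion $\mathrm{Ran}(S)=\mathrm{Ran}(S^m)$. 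For the converse induction step, the order of limits is handled correctly: for fixed $k$ the tail $\sum_{j\geq m}\binom{k}{j}(-S)^j x_n$ is a finite sum each of whose terms tends to $0$ as $n\to\infty$ (since $S^jx_n=S^{j-m+1}y_n$ and $\|Sy_n\|\to 0$), so letting $n\to\infty$ first isolates the $\binom{k}{m-1}y_n$ term and gives $\|T^k\|\geq c'k^{m-1}/C$ for large $k$, contradicting $\|T^k\|/k\to 0$ when $m\geq 2$; the uniform bound $\|x_n\|\leq C$ from the open mapping theorem (legitimate because $\mathrm{Ran}(S^{m-1})$ is closed by the induction hypothesis) is exactly what makes the last step work. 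The preliminary observation $\ker S^j=\ker S$ is also correctly derived from $\|T^n\|/n\to 0$. In short: your argument supplies a proof that the paper deliberately omits, at the cost of still taking Lin's $m=1$ theorem as a black box --- which is consistent with how the paper itself treats these ergodic-theoretic inputs.
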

Let $K(X)$ be the closed ideal of compact operators in $B(X)$. If $T\in B(X)$ then its image in the Calkin algebra $B(X)/K(X)$ is denoted by $\dot{T}$. By
Dunford's Theorem \ref{thm1.1} or by an analogous version for Banach algebras (without condition (3)), when $X$ is a complex Banach space and $\frac{\|\dot{T}^{n}\|}{n}\to 0$, the convergence of $(M_{n}(\dot{T}))_{n}$ in the Calkin algebra is equivalent to 1 being a simple pole
of the resolvent of $\dot{T}$ or being in the resolvent set of $\dot{T}$. But even if we are given that the limit $\dot{P}\in B(X)/K(X)$ exists, there is no obvious
geometric interpretation of $\dot{P}$. In the context of Theorems \ref{thm1.1} and \ref{thm1.2}, if the limit of $(M_{n}(T))_{n}$ exists, then it is a projection onto
$\text{ker}(I-T)$. In the context of the Calkin algebra, the limit $\dot{P}$ is still an idempotent in $B(X)/K(X)$; hence by making a compact perturbation, we can assume
that $P$ is an idempotent in $B(X)$ (see Lemma \ref{lem2.6} below).

A natural question to ask is: what is the range of $P$? Although the range of $P$ is not unique (since $P$ is only unique up to a compact perturbation), it can be
thought of as an analog of $\text{ker}(I-T)$ in the Calkin algebra setting. If $T_{0}\in B(X)$ then $\text{ker }T_{0}$ is the maximal subspace of $X$ on which $T_{0}=0$.
This suggests that the analog of $\text{ker }T_{0}$ in the Calkin algebra setting is the maximal subspace of $X$ on which $T_{0}$ is compact. But the maximal subspace does
not exist unless it is the whole space $X$. Thus, we introduce the following concept.

Let $X$ be a Banach space and let $(P)$ be a property that a subspace $M$ of $X$ may or may not have. We say that a subspace $M\subset X$ is an {\it essentially maximal}
subspace of $X$ satisfying $(P)$ if it has $(P)$ and if every subspace $M_{0}\supset M$ having property $(P)$ satisfies $\text{dim }M_{0}/M<\infty$.

Then the analog of $\text{ker }T_{0}$ in the Calkin algebra setting is an essentially maximal subspace of $X$ on which $T_{0}$ is compact. It turns that if such an analog
for $I-T$ exists, then it is already sufficient for the convergence of $(M_{n}(\dot{T}))_{n}$ in the Calkin algebra (at least for a large class of Banach spaces), which is
the main result of this paper.

Before stating this theorem, we recall that a Banach space $Z$ has the {\it bounded compact approximation property} (BCAP) if there is a uniformly bounded net\\
$(S_{\alpha})_{\alpha\in\Lambda}$ in $K(Z)$ converging strongly to the identity operator $I\in B(Z)$. It is always possible to choose $\Lambda$ to be the set of all finite
dimensional subspaces of $Z$ directed by inclusion.  If the net $(S_{\alpha})_{\alpha\in\Lambda}$ can be chosen so that $\displaystyle\sup_{\alpha\in\Lambda}\|S_{\alpha}\|
\leq\lambda$, then we say that $Z$ has the $\lambda$-BCAP. It is known that if a reflexive space has the BCAP, then the space has the $1$-BCAP. For $T\in B(X)$, the
essential norm $\|T\|_{e}$ is the norm of $\dot{T}$ in $B(X)/K(X)$.
\begin{theorem}\label{thm1.3}
Let $m\geq 1$. Suppose that $X$ is a real or complex Banach space having the bounded compact approximation property. If $T\in B(X)$ satisfies $\frac{\|T^{n}\|_{e}}{n}\to
0$, then the following conditions are equivalent.
\begin{enumerate}
\item The sequence $(M_{n}(\dot{T}))_{n}$ converges in norm to an element in $B(X)/K(X)$.
\item There is an essentially maximal subspace of $X$ on which $(I-T)^{m}$ is compact.
\end{enumerate}
\end{theorem}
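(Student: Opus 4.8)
The plan is to route everything through the mean ergodic behaviour of the left multiplication operator on the Calkin algebra and then to interpret the resulting ``closed range'' condition geometrically, with the bounded compact approximation property carrying the harder implication. \textbf{Reduction.} Write $\mathcal{C}=B(X)/K(X)$ and $A=(I-T)^{m}$. For $a\in\mathcal{C}$ the left multiplication operator $L_{a}\in B(\mathcal{C})$ satisfies $L_{a}^{n}=L_{a^{n}}$, and since $\mathcal{C}$ is unital with identity of norm $1$ one has $\|L_{a}\|=\|a\|$; thus the hypothesis says exactly $\|L_{\dot{T}}^{n}\|/n\to 0$, and $(M_{n}(\dot{T}))_{n}$ converges in norm in $\mathcal{C}$ if and only if $(M_{n}(L_{\dot{T}}))_{n}=(L_{M_{n}(\dot{T})})_{n}$ converges in norm in $B(\mathcal{C})$. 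Applying Theorem~\ref{thm1.2} to the operator $L_{\dot{T}}$ on the Banach space $\mathcal{C}$, condition~(1) is therefore equivalent to $(I-L_{\dot{T}})^{m}=L_{(I-\dot{T})^{m}}$ having closed range, i.e.\ to the left ideal $(I-\dot{T})^{m}\mathcal{C}$ being closed in $\mathcal{C}$; and since the quotient map $B(X)\to\mathcal{C}$ is open, this is equivalent to $AB(X)+K(X)$ being closed in $B(X)$. It remains to match ``$AB(X)+K(X)$ is closed'' with~(2).

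\textbf{(1)$\Rightarrow$(2).} Passing to the complexification if $X$ is real (which changes neither hypothesis nor conclusion), Theorem~\ref{thm1.1} applied in $\mathcal{C}$ gives that $1$ is a simple pole of the resolvent of $\dot{T}$ or lies in its resolvent set; in either case there are an idempotent $\dot{P}\in\mathcal{C}$ with $\dot{T}\dot{P}=\dot{P}\dot{T}=\dot{P}$ and an element $\dot{R}\in\mathcal{C}$ commuting with $I-\dot{T}$ and satisfying $(I-\dot{T})\dot{R}=\dot{R}(I-\dot{T})=I-\dot{P}$. Lift $\dot{P}$ to an idempotent $P\in B(X)$ (Lemma~\ref{lem2.6}), lift $\dot{R}$ to some $R\in B(X)$, and set $N=\operatorname{ran}P$, a closed subspace. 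From $(I-\dot{T})\dot{P}=0$ we get $AP\in K(X)$, so $A|_{N}=(AP)|_{N}$ is compact; and $\dot{R}^{m}(I-\dot{T})^{m}=(\dot{R}(I-\dot{T}))^{m}=(I-\dot{P})^{m}=I-\dot{P}$, so $R^{m}A-(I-P)\in K(X)$. If now $N_{0}\supseteq N$ is a closed subspace with $A|_{N_{0}}$ compact, then $(I-P)|_{N_{0}}$ is compact (it equals $R^{m}(A|_{N_{0}})$ up to a compact operator); writing $X=N\oplus\ker P$, hence $N_{0}=N\oplus(N_{0}\cap\ker P)$, the operator $(I-P)|_{N_{0}}$ vanishes on $N$ and is the inclusion on $N_{0}\cap\ker P$, so its compactness forces $\dim N_{0}\cap\ker P=\dim N_{0}/N<\infty$. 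Thus $N$ is essentially maximal, which is~(2).

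\textbf{(2)$\Rightarrow$(1).} This is the heart of the matter. Let $N$ be essentially maximal with $A|_{N}$ compact. Step one converts essential maximality into a quantitative estimate: there are a finite-codimensional closed subspace $Y\subseteq X$ and $\delta>0$ with $\|Ay\|\geq\delta\operatorname{dist}(y,N)$ for every $y\in Y$. One argues by contraposition: fixing a bounded net $(S_{\alpha})$ in $K(X)$ with $S_{\alpha}\to I$ strongly, the failure of the estimate on every finite-codimensional subspace allows one to build inductively a sequence $(y_{k})$ with $\operatorname{dist}(y_{k},N)=1$, $\|Ay_{k}\|\to 0$, and $(y_{k})$ almost supported on successive finite-dimensional pieces cut out by the net, so that $\overline{\operatorname{span}}\{y_{k}\}+N$ is a closed subspace containing $N$ with infinite-dimensional quotient on which $A$ is still compact, contradicting essential maximality. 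Step two: after a routine reduction to the case $Y=X$, the estimate shows that the induced map $\hat{A}\colon X/N\to X/\overline{AN}$, $x+N\mapsto Ax+\overline{AN}$, is bounded below; lifting a left inverse of $\hat{A}$ through the quotient maps $q_{N}$ and $q_{\overline{AN}}$ produces $B\in B(X)$ with $BAx-x\in N$ for all $x$, and then $ABA-A=A(BA-I)$ is the composition of a bounded operator $X\to N$ with the compact operator $A|_{N}\colon N\to X$, hence compact; so $ABA\equiv A\pmod{K(X)}$, the element $\dot{A}\dot{B}$ is idempotent, $\dot{A}\mathcal{C}=\dot{A}\dot{B}\mathcal{C}=\ker L_{I-\dot{A}\dot{B}}$ is closed, and (1) follows from the Reduction. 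The bounded compact approximation property is used only here, in two places: step one needs the approximating net as the sole ``coordinate'' structure on a general such space for the gliding-hump construction, and step two needs it to carry out the two liftings, since neither $N$ nor $\operatorname{ran}\hat{A}$ need be complemented and genuine complementability must be replaced by statements valid modulo compact operators. I expect step one to be the principal obstacle: with only the net and no finite-dimensional decomposition at hand, the delicate point is to choose the $y_{k}$ so that $A$ \emph{remains compact} on $\overline{\operatorname{span}}\{y_{k}\}+N$, not merely so that $\|Ay_{k}\|\to 0$.
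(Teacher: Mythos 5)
Your reduction to the left multiplication operator $L_{\dot{T}}$ on $\mathcal{C}=B(X)/K(X)$ is correct and is a clean alternative to the paper's framing, and your (1)$\Rightarrow$(2) argument is essentially sound and close in spirit to the paper's: where you use the reduced resolvent $\dot{R}$ with $\dot{R}^{m}(\dot I-\dot T)^{m}=\dot I-\dot P$, the paper uses the polynomials $f_{n}(\dot T)^{m}(\dot I-\dot T)^{m}\to\dot I-\dot P$, which has the advantage of working directly over the reals; your complexification step leaves unexamined how an essentially maximal subspace of $X_{C}$ is pulled back to one of $X$, but that is a repairable blemish. The genuine gap is in (2)$\Rightarrow$(1), and it sits exactly where you suspected. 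Your step one is not established: choosing $y_{k}$ with $d(y_{k},N)=1$ and $\|Ay_{k}\|\to 0$ by a gliding hump does not make $A$ compact on $\overline{\text{span}}\{y_{k}\}+N$ — you would need $(y_{k})$ to be a basic sequence modulo $N$ with summable $\|Ay_{k}\|$, and the sum with $N$ to be closed with a bounded decomposition — and the target estimate $\|Ay\|\geq\delta\, d(y,N)$ is itself suspect because $A$ is only compact, not zero, on $N$, so $\|An\|$ for $n\in N$ can swamp the inequality. Your step two is likewise unsubstantiated: producing $B\in B(X)$ with $BAx-x\in N$ for all $x$ amounts to lifting an operator through the quotient map $X\to X/N$, which requires something like complementation of $N$; the BCAP gives no such lifting, and you offer no mechanism by which it would.

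The paper resolves both difficulties by a route you are missing. For the quantitative step it regards $A=(I-T)^{m}$ as an operator into $\ell_{\infty}(J)\supseteq X$ and invokes Lindenstrauss's theorem on extending compact operators into injective spaces to produce a compact $K:X\to\ell_{\infty}(J)$ with $K|_{N}=A|_{N}$; then $A-K$ vanishes on $N$, $\ker(A-K)$ inherits essential maximality, and the Lindenstrauss--Tzafriri dichotomy (an injective operator without closed range is compact on some infinite-dimensional subspace) forces $A-K$ to have closed range (Proposition \ref{pro2.5}). It then avoids constructing any generalized inverse $B$ at all: closed range of $A-K$ is transported to closed range of $\widehat{A}=(I-\widehat{T})^{m}$ on the ultrapower-of-duals space $\widehat{X}$ (Proposition \ref{pro2.3} and Lemma \ref{lem2.4}), Theorem \ref{thm1.2} is applied to $\widehat{T}\in B(\widehat{X})$ to get norm convergence of $M_{n}(\widehat{T})$, and convergence of $M_{n}(\dot{T})$ is pulled back through the map $f(\dot{S})=\widehat{S}$ of Theorem \ref{thm2.1} — and it is precisely in showing that $f$ is bounded below that the BCAP is used. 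Without some substitute for the compact-extension argument and for the passage to $\widehat{X}$ (or an actual construction of your operator $B$), your (2)$\Rightarrow$(1) does not go through.
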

The idea of the proof is to reduce Theorem \ref{thm1.3} to Theorem \ref{thm1.2} by constructing a Banach space $\widehat{X}$ and an embedding $f:B(X)/K(X)\to B(\widehat{X}
)$ so that if $T\in B(X)$ and if there is an essentially maximal subspace $M$ of $X$ on which $T$ is compact, then $f(\dot{T})$ has closed range, and then applying Theorem
\ref{thm1.2} to $f(\dot{T})$. The BCAP of $X$ is used to show that $f$ is an embedding but is not used in the construction of $\widehat{X}$ and $f$. The construction of
$f$ is based on the Calkin representation \cite[Theorem 5.5]{Calkin}.

The authors thank C. Foias and C. Pearcy for helpful discussions.
\section{The Calkin representation for Banach spaces}
In this section, $X$ is a fixed infinite dimensional Banach space. Let $\Lambda_{0}$ be the set of all finite dimensional subspaces of $X$ directed by inclusion $\subset$.
Then $\{\{\alpha\in\Lambda_{0}:\alpha\supset\alpha_{0}\}:\alpha_{0}\in\Lambda_{0}\}$ is a filter base on $\Lambda_{0}$, so it is contained in an ultrafilter $U$ on
$\Lambda_{0}$.

Let $Y$ be an arbitary infinite dimensional Banach space and let $(Y^{*})^{U}$ be the ultrapower (see e.g., \cite[Chapter 8]{Diestel}) of $Y^{*}$ with respect to $U$.
(The ultrafilter $U$ and the directed set $\Lambda_{0}$ do not depend on $Y$.) If $(y_{\alpha}^{*})_{\alpha\in\Lambda_{0}}$ is a bounded net in $Y^{*}$, then its image in
$(Y^{
*})^{U}$ is denoted by $(y_{\alpha}^{*})_{\alpha,U}$. Consider the (complemented) subspace\[\widehat{Y}:=\left\{(y_{\alpha}^{*})_{\alpha,U}\in(Y^{*})^{U}:w^{*}\hyphen
\lim_{\alpha,U}y_{\alpha}^{*}=0\right\}\]of $(Y^{*})^{U}$. Here $\displaystyle w^{*}\hyphen\lim_{\alpha,U}y_{\alpha}^{*}$ is the $w^{*}$-limit of $(y_{\alpha}^{*})_{
\alpha\in\Lambda_{0}}$ through $U$, which exists by the Banach-Alaoglu Theorem.

Whenever $T\in B(X,Y)$, we can define an operator $\widehat{T}\in B(\widehat{Y},\widehat{X})$ by sending $(y_{\alpha}^{*})_{\alpha,U}$ to $(T^{*}y_{\alpha}^{*})_{\alpha,
U}$. Note that if $K\in K(X,Y)$ then $\widehat{K}=0$, where $K(X,Y)$ denotes the space of all compact operators in $B(X,Y)$.

\begin{theorem}\label{thm2.1}
Suppose that $X$ has the $\lambda$-BCAP. Then the operator\\$f:B(X)/K(X)\to B(\widehat{X}), \;\dot{T}\mapsto\widehat{T}$, is a norm one
$(\lambda+1)$-embedding into $B(\widehat{X})$ satisfying\[f(\dot{I})=I\text{ and }f(\dot{T_{1}}\dot{T_{2}})=f(\dot{T_{2}})f(\dot{T_{1}}),\qquad T_{1},T_{2}\in B(X).\]
\end{theorem}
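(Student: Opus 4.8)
My plan is to verify each asserted property of $f$ in turn, with the main work being the norm estimate showing $f$ is a $(\lambda+1)$-embedding. First I would check the \emph{algebraic} properties, which should be essentially formal. Linearity of $T\mapsto\widehat{T}$ is clear from the definition $\widehat{T}\colon(y_\alpha^*)_{\alpha,U}\mapsto(T^*y_\alpha^*)_{\alpha,U}$, since $T\mapsto T^*$ is linear and the ultrapower construction respects linear combinations. That $\widehat{T}$ maps $\widehat{X}$ into $\widehat{X}$ needs the observation that $w^*\hyphen\lim_{\alpha,U}T^*y_\alpha^*=T^*\bigl(w^*\hyphen\lim_{\alpha,U}y_\alpha^*\bigr)=0$ whenever $w^*\hyphen\lim_{\alpha,U}y_\alpha^*=0$; this uses $w^*$-$w^*$ continuity of $T^*$. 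Next, $\widehat{I}=I$ is immediate, and the \emph{anti}-multiplicativity $f(\dot T_1\dot T_2)=f(\dot T_2)f(\dot T_1)$, i.e.\ $\widehat{T_1T_2}=\widehat{T_2}\,\widehat{T_1}$, follows from $(T_1T_2)^*=T_2^*T_1^*$ applied coordinatewise in the ultrapower. The remark already noted in the excerpt that $\widehat{K}=0$ for compact $K$ shows that $\dot T\mapsto\widehat T$ is well defined on the Calkin algebra. Finally $\|f\|\le 1$: since $\|\widehat T\|\le\sup_\alpha\|T^*y_\alpha^*\|$-type estimates give $\|\widehat T\|\le\|T^*\|=\|T\|$, and then $\|\widehat T\|\le\|T+K\|$ for every compact $K$ because $\widehat{T+K}=\widehat T$, so $\|f(\dot T)\|=\|\widehat T\|\le\|\dot T\|$.

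The substantive part is the lower bound: I must show $\|\widehat T\|\ge(\lambda+1)^{-1}\|\dot T\|$, equivalently $\|\dot T\|\le(\lambda+1)\|\widehat T\|$, which is where the $\lambda$-BCAP enters. Fix $T\in B(X)$ and $\varepsilon>0$; choose $x\in X$, $\|x\|=1$, with $\|Tx\|_e$ close to $\|\dot T\|$ — more precisely, pick a unit vector $x$ and a norm-one functional $x^*\in X^*$ witnessing that $\|(I-S_\alpha)Tx\|$ stays near $\|\dot T\|$ for $\alpha$ large, using that for the $\lambda$-BCAP net $(S_\alpha)$ one has $\|\dot T\| = \lim_\alpha\|(I-S_\alpha)T\|\cdot$(something controlled by $\lambda+1$); the standard fact here is $\|\dot A\|\le\limsup_\alpha\|(I-S_\alpha)A\|\le(\lambda+1)\|\dot A\|$ since $S_\alpha A$ is compact. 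So I can find unit vectors $x_\alpha\in X$ with $\|(I-S_\alpha)Tx_\alpha\|$ bounded below by roughly $(\lambda+1)^{-1}\|\dot T\|$, and then norm-one functionals $y_\alpha^*\in X^*$ with $y_\alpha^*\bigl((I-S_\alpha)Tx_\alpha\bigr)$ also that large, i.e.\ $\bigl((I-S_\alpha)^*y_\alpha^*\bigr)(Tx_\alpha)\ge(\lambda+1)^{-1}\|\dot T\|-\varepsilon$.

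Now set $z_\alpha^* := (I-S_\alpha)^*y_\alpha^*\in X^*$; these are uniformly bounded (by $\lambda+1$), so $(z_\alpha^*)_{\alpha,U}$ defines an element of $(X^*)^U$. The key point is that $(z_\alpha^*)_{\alpha,U}$ actually lies in $\widehat X$: for any fixed $u\in X$, once $\alpha\ni u$ we have $S_\alpha^* y_\alpha^*(u)=y_\alpha^*(S_\alpha u)=y_\alpha^*(u)$, so $z_\alpha^*(u)=y_\alpha^*(u)-y_\alpha^*(u)\to$ ... — wait, more carefully: $z_\alpha^*(u)=y_\alpha^*(u)-y_\alpha^*(S_\alpha u)$, and for $\alpha$ containing the span of $u$ and of $Tx_\alpha$ we can arrange the relevant cancellations; the precise claim to verify is $w^*\hyphen\lim_{\alpha,U}z_\alpha^*=0$, which follows because for each fixed $u$, $S_\alpha u\to u$ so $y_\alpha^*(u-S_\alpha u)\to 0$ uniformly in the choice of $y_\alpha^*$ (as $\|y_\alpha^*\|=1$). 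Hence $(z_\alpha^*)_{\alpha,U}\in\widehat X$ with norm $\le\lambda+1$. Finally I estimate $\|\widehat T\|$ from below by testing on $(z_\alpha^*)_{\alpha,U}/(\lambda+1)$: the norm of $\widehat T (z_\alpha^*)_{\alpha,U}=(T^*z_\alpha^*)_{\alpha,U}$ is $\lim_{\alpha,U}\|T^*z_\alpha^*\|\ge\lim_{\alpha,U} (T^*z_\alpha^*)(x_\alpha)=\lim_{\alpha,U} z_\alpha^*(Tx_\alpha)\ge(\lambda+1)^{-1}\|\dot T\|-\varepsilon$. Dividing by $\lambda+1$ and letting $\varepsilon\to 0$ gives $\|\widehat T\|\ge (\lambda+1)^{-2}\|\dot T\|$ — so I would need to be more careful about normalization to land exactly at $(\lambda+1)^{-1}$; the fix is to test on $(y_\alpha^*$-related vectors of norm exactly $1$, or to absorb one factor of $\lambda+1$ by choosing the approximating net and the functionals more sharply (e.g.\ using that $\|(I-S_\alpha)^*y_\alpha^*\|$ can be taken close to $1$ rather than $\lambda+1$ after re-normalizing $y_\alpha^*$ appropriately). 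The delicate bookkeeping in this normalization — getting the constant down to exactly $\lambda+1$ and ensuring the chosen net $(z_\alpha^*)$ genuinely $w^*$-converges to $0$ through $U$ — is the main obstacle; everything else is routine.
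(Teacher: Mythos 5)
Your strategy coincides with the paper's: the algebraic properties and $\|f\|\le 1$ are handled exactly as you describe, and the lower bound comes from feeding the vectors $\bigl((I-S_\alpha)^*y_\alpha^*\bigr)_{\alpha,U}$ into $\widehat{T}$, after checking (correctly, as you do) that they lie in $\widehat{X}$ because $(I-S_\alpha)^*y_\alpha^*\to 0$ in the $w^*$-topology. The one genuine defect is the quantitative slip you flag yourself: you land on $(\lambda+1)^{-2}$ instead of $(\lambda+1)^{-1}$, and your proposed repairs (``renormalize'', ``choose the net more sharply'') are left vague. The loss enters at the very first step, where you only claim unit vectors $x_\alpha$ with $\|(I-S_\alpha)Tx_\alpha\|$ bounded below by roughly $(\lambda+1)^{-1}\|\dot T\|$. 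But $S_\alpha T$ is compact, so for \emph{every} $\alpha$ one has $\|(I-S_\alpha)T\|=\|T-S_\alpha T\|\ge\inf_{K\in K(X)}\|T-K\|=\|T\|_e$ --- this is the first inequality in the ``standard fact'' you quote, and it costs nothing. Hence you may take $\|(I-S_\alpha)Tx_\alpha\|\ge\|T\|_e-\varepsilon$ and norming functionals $y_\alpha^*$ with $z_\alpha^*(Tx_\alpha)=y_\alpha^*\bigl((I-S_\alpha)Tx_\alpha\bigr)\ge\|T\|_e-2\varepsilon$, while still $\|(z_\alpha^*)_{\alpha,U}\|\le 1+\lambda$. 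The factor $\lambda+1$ is then paid exactly once, in the norm of the test vector, and you obtain $\|\widehat{T}\|\ge(\lambda+1)^{-1}\|T\|_e$ as required.

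This corrected version is precisely the paper's argument: it assumes $\|T\|_e>1$, uses $\|T^*(I-S_\alpha)^*\|=\|(I-S_\alpha)T\|\ge\|T\|_e>1$ to pick unit functionals $x_\alpha^*$ with $\|T^*(I-S_\alpha)^*x_\alpha^*\|>1$, and concludes from $(1+\lambda)\|f(\dot T)\|\ge\|f(\dot T)\|\,\|((I-S_\alpha)^*x_\alpha^*)_{\alpha,U}\|\ge\lim_{\alpha,U}\|T^*(I-S_\alpha)^*x_\alpha^*\|\ge 1$. (The paper works with the operator norm $\|T^*(I-S_\alpha)^*\|$ directly and never introduces the vectors $x_\alpha$, which spares you the extra bookkeeping.) With this one correction your argument is complete and is essentially identical to the paper's.
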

\begin{proof}
It is easy to verify that $f$ is a linear map, $f(\dot{I})=I$, and $f(\dot{T_{1}}\dot{T_{2}})=f(\dot{T_{2}})f(\dot{T_{1}})$ for $T_{1},\,T_{2}\in B(X)$. If $T\in B(X)$,
then clearly $\|f(\dot{T})\|\leq\|T\|$, and thus we also have $\|f(\dot{T})\|\leq\|T\|_{e}$. Hence $\|f\|\leq 1$. It remains to show that $f$ is a
$(\lambda+1)$-embedding (i.e., $\displaystyle\inf_{\|T\|_{e}>1}\|f(\dot{T})\| \geq (\lambda+1)^{-1}$).

To do this, let $T\in B(X)$ satisfy $\|T\|_{e}> 1$. Since $X$ has the $\lambda$-BCAP, we can find a net of operators $(S_{\alpha})_{\alpha\in\Lambda_{0}}\subset K(X)$
converging strongly to $I$ such that $\displaystyle\sup_{\alpha\in\Lambda_{0}}\|S_{\alpha}\|\leq\lambda$. Then $\|T^{*}(I-S_{\alpha})^{*}\|=\|(I-S_{\alpha})T\|\geq\|T\|_{e
}> 1,\;\alpha\in\Lambda_{0}$. Thus, there exists $(x_{\alpha}^{*})_{\alpha\in\Lambda_{0}}\subset X^{*}$ such that $\|x_{\alpha}^{*}\|=1$ and $\|T^{*}(I-S_{\alpha})^{*}x_{
\alpha}^{*}\| > 1$ for $\alpha\in\Lambda_{0}$.

Note that for every $x\in X$,\[\limsup_{\alpha\in\Lambda_{0}}|\langle(I-S_{\alpha})^{*}x_{\alpha}^{*},x\rangle|=\limsup_{\alpha\in\Lambda_{0}}|\langle x_{\alpha}^{*},(I-
S_{\alpha})x\rangle|\leq\limsup_{\alpha\in\Lambda_{0}}\|(I-S_{\alpha})x\|=0,\]and so the net $((I-S_{\alpha})^{*}x_{\alpha}^{*})_{\alpha\in\Lambda_{0}}$ converges in the
$w^{*}$-topology to 0. By the construction of $U$, this implies that\[w^{*}\hyphen\lim_{\alpha, U}(I-S_{\alpha})^{*}x_{\alpha}^{*}=0.\]Therefore, due to the definition
$f(\dot{T})=\widehat{T}$, we obtain
\begin{eqnarray*}
(1+\lambda)\|f(\dot{T})\|\geq\|f(\dot{T})\|\lim_{\alpha, U}\|(I-S_{\alpha})^{*}x_{\alpha}^{*}\|&=&\|f(\dot{T})\|\|((I-S_{\alpha})^{*}x_{\alpha}^{*})_{\alpha,U}\|\\&\geq
&\|f(\dot{T})((I-S_{\alpha})^{*}x_{\alpha}^{*})_{\alpha,U}\|\\&=&\lim_{\alpha, U}\|T^{*}(I-S_{\alpha})^{*}x_{\alpha}^{*}\|\geq 1.
\end{eqnarray*}
It follows that $\|f(\dot{T})\|\geq(1+\lambda)^{-1}$ whenever $\|T\|_{e}>1$.
\end{proof}

{\bf Remark 1.}  We do not know whether Theorem \ref{thm2.1} is true without the hypothesis that $X$ has the BCAP.

{\bf Remark 2.}  The embedding in Theorem \ref{thm2.1} is an isometry if the approximating net can be chosen so that $\|I -S_\alpha\| =1$ for every $\alpha$.  This is
the case if, for example, the space $X$ has a 1-unconditional basis.  However, we do not know whether the embedding is an isometry if $X=L_p(0,1)$ with $p\not=2$.

If $N$ is a subset of $Y^{*}$, then we can define a subset $N'$ of $\widehat{Y}$ by\[N':=\left\{(y_{\alpha}^{*})_{\alpha,U}\in\widehat{Y}:\lim_{\alpha,U}d(y_{\alpha}^{*
},N)=0\right\},\]
where\[d(y_{\alpha}^{*},N):=\inf_{z^{*}\in N}\|y_{\alpha}^{*}-z^{*}\|.\]
\begin{lemma}\label{lem2.2}
If $N$ is a $w^{*}$-closed subspace of $Y^{*}$, then for every $(y_{\alpha}^{*})_{\alpha,U}\in\widehat{Y}$,\[d((y_{\alpha}^{*})_{\alpha,U},N')\leq
2\lim_{\alpha,U}d(y_{\alpha}^{*},N).\]
\end{lemma}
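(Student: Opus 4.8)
The plan is to exhibit, given $(y_\alpha^*)_{\alpha,U}\in\widehat{Y}$, a concrete element of $N'$ that is within $2\lim_{\alpha,U}d(y_\alpha^*,N)$ of it. Put $\delta:=\lim_{\alpha,U}d(y_\alpha^*,N)$ and fix $\epsilon>0$. For each $\alpha\in\Lambda_0$, use the definition of the infimum defining $d(y_\alpha^*,N)$ to choose $z_\alpha^*\in N$ with $\|y_\alpha^*-z_\alpha^*\|\leq d(y_\alpha^*,N)+\epsilon$. Since $(y_\alpha^*)_\alpha$ is bounded (being a representative of a point of the ultrapower) and $d(y_\alpha^*,N)\leq\|y_\alpha^*\|$, the net $(z_\alpha^*)_\alpha$ is bounded, so $z^*:=w^*\hyphen\lim_{\alpha,U}z_\alpha^*$ exists by Banach--Alaoglu; and because $N$ is $w^*$-closed with $z_\alpha^*\in N$ for all $\alpha$, we get $z^*\in N$.

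Next I would correct this net so that it represents a point of $\widehat{Y}$: set $\widetilde{z}_\alpha^*:=z_\alpha^*-z^*$. Then $\widetilde{z}_\alpha^*\in N$ for every $\alpha$ since $N$ is a subspace, and $w^*\hyphen\lim_{\alpha,U}\widetilde{z}_\alpha^*=z^*-z^*=0$, so $(\widetilde{z}_\alpha^*)_{\alpha,U}\in\widehat{Y}$; moreover $d(\widetilde{z}_\alpha^*,N)=0$ for all $\alpha$, hence $(\widetilde{z}_\alpha^*)_{\alpha,U}\in N'$. For the distance estimate, note that $w^*\hyphen\lim_{\alpha,U}y_\alpha^*=0$ gives $z^*=w^*\hyphen\lim_{\alpha,U}(z_\alpha^*-y_\alpha^*)$, so by $w^*$-lower semicontinuity of the norm on $Y^*$ (equivalently, $w^*$-closedness of its balls), $\|z^*\|\leq\lim_{\alpha,U}\|z_\alpha^*-y_\alpha^*\|\leq\delta+\epsilon$. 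Therefore
\[d\bigl((y_\alpha^*)_{\alpha,U},N'\bigr)\leq\bigl\|(y_\alpha^*)_{\alpha,U}-(\widetilde{z}_\alpha^*)_{\alpha,U}\bigr\|=\lim_{\alpha,U}\|y_\alpha^*-z_\alpha^*+z^*\|\leq(\delta+\epsilon)+\|z^*\|\leq 2\delta+2\epsilon,\]
and letting $\epsilon\downarrow 0$ finishes the proof.

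I do not expect a real obstacle here; the one place that needs attention is the simultaneous use of $z^*\in N$ and $w^*\hyphen\lim_{\alpha,U}\widetilde z_\alpha^*=0$. This is precisely where the hypothesis that $N$ be $w^*$-closed (not merely norm-closed) enters: it allows the near-optimal net $(z_\alpha^*)_\alpha$ to be pushed back into $\widehat Y$ by subtracting its $w^*$-limit while remaining inside $N$, and the price of this correction is an extra term of size $\|z^*\|\leq\delta+\epsilon$, which is what forces the constant $2$.
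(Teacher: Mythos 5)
Your proof is correct and follows essentially the same route as the paper's: choose near-optimal $z_\alpha^*\in N$, recenter by subtracting the $w^*$-limit (which stays in $N$ by $w^*$-closedness) to land in $N'$, and absorb the correction term via $w^*$-lower semicontinuity of the norm, yielding the factor $2$. The only cosmetic difference is that the paper selects $z_\alpha^*$ only on a set of the ultrafilter and sets it to $0$ elsewhere, while you select for all $\alpha$ and bound the net using $d(y_\alpha^*,N)\leq\|y_\alpha^*\|$; both give the needed boundedness.
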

\begin{proof}
Let $\displaystyle a=\lim_{\alpha,U}d(y_{\alpha}^{*},N)$. Let $\delta>0$. Then\[A:=\left\{\alpha\in\Lambda:d(y_{\alpha}^{*},N)<a+\delta\right\}\in U.\]
Whenever $\alpha\in A$, $\|y_{\alpha}^{*}-z_{\alpha}^{*}\|<a+\delta$ for some $z_{\alpha}^{*}\in N$. If we take $z_{\alpha}^{*}=0$ for $\alpha\notin A$, then, since
$\displaystyle\sup_{\alpha\in\Lambda}\|y_{\alpha}^{*}\|<\infty$,\[\sup_{\alpha\in\Lambda}\|z_{\alpha}^{*}\|=\sup_{\alpha\in A}\|z_{\alpha}^{*}\|\leq(a+\delta)+\sup_{
\alpha\in A}\|y_{\alpha}^{*}\|<\infty.\]As a consequence, $\displaystyle\left(z_{\alpha}^{*}-w^{*}\hyphen\lim_{\beta,U}z_{\beta}^{*}\right)_ {\alpha,U}\in N'$, since $N$
is $w^{*}$-closed. Therefore,
\begin{eqnarray*}
d\left((y_{\alpha}^{*})_{\alpha,U},N'\right)&\leq& d\left((y_{\alpha}^{*})_{\alpha,U},\left(z_{\alpha}^{*}-w^{*}\hyphen\lim_{\beta,U}z_{\beta}^{*}\right)_{\alpha,U}
\right)\\&=&\lim_{\alpha,U}\left\|y_{\alpha}^{*}-z_{\alpha}^{*}+w^{*}\hyphen\lim_{\beta,U}z_{\beta}^{*}\right\|
\\&\leq&\lim_{\alpha,U}\left\|y_{\alpha}^{*}-z_{\alpha}^{*}\right\|+\left\|w^{*}\hyphen\lim_{\beta,U
 }z_{\beta}^{*}\right\|\\&\leq&(a+\delta)+\left\|w^{*}\hyphen\lim_{\beta,U}(z_{\beta}^{*}-y_{\beta}^{*})\right\|\\&\leq&(a+\delta)+\lim_{\beta,U}\|z_
{\beta}^{*}-y_{\beta}^{*}\|\leq2(a+\delta).
\end{eqnarray*}
But $\delta$ can be arbitarily close to 0 so $\displaystyle d\left((y_{\alpha}^{*})_{\alpha,U},N'\right)\leq 2a=2\lim_{\alpha,U}d(y_{\alpha}^{*},N)$.
\end{proof}
\begin{proposition}\label{pro2.3}
If $X$ and $Y$ are infinite dimensional Banach spaces and if $T\in B(X,Y)$ has closed range then $\widehat{T}\in B(\widehat{Y},\widehat{X}
)$ also has closed range.
\end{proposition}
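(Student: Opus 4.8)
The plan is to reduce the statement to a lower bound on $\widehat{T}$ modulo its kernel, obtained by combining the closed range theorem with Lemma~\ref{lem2.2}. Since $T$ has closed range, the closed range theorem gives that $T^{*}:Y^{*}\to X^{*}$ has closed range; equivalently, the induced injection $Y^{*}/\text{ker }T^{*}\to X^{*}$ is bounded below, so there is a constant $c>0$ with $d(y^{*},\text{ker }T^{*})\le c\|T^{*}y^{*}\|$ for every $y^{*}\in Y^{*}$. Set $N:=\text{ker }T^{*}$. As the annihilator of the range of $T$, $N$ is a $w^{*}$-closed subspace of $Y^{*}$, so Lemma~\ref{lem2.2} applies to the associated subset $N'\subset\widehat{Y}$.

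First I would check that $N'\subset\text{ker }\widehat{T}$: if $(y^{*}_{\alpha})_{\alpha,U}\in N'$, then for each $\alpha$ and each $z^{*}\in N$ we have $\|T^{*}y^{*}_{\alpha}\|=\|T^{*}(y^{*}_{\alpha}-z^{*})\|\le\|T\|\,\|y^{*}_{\alpha}-z^{*}\|$, whence $\|T^{*}y^{*}_{\alpha}\|\le\|T\|\,d(y^{*}_{\alpha},N)$ and therefore $\lim_{\alpha,U}\|T^{*}y^{*}_{\alpha}\|=0$, i.e.\ $(y^{*}_{\alpha})_{\alpha,U}\in\text{ker }\widehat{T}$. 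Then comes the main estimate: for an arbitrary $(y^{*}_{\alpha})_{\alpha,U}\in\widehat{Y}$, chaining this inclusion, Lemma~\ref{lem2.2}, and the closed range inequality for $T^{*}$ yields
\[
d\bigl((y^{*}_{\alpha})_{\alpha,U},\text{ker }\widehat{T}\bigr)\le d\bigl((y^{*}_{\alpha})_{\alpha,U},N'\bigr)\le 2\lim_{\alpha,U}d(y^{*}_{\alpha},N)\le 2c\lim_{\alpha,U}\|T^{*}y^{*}_{\alpha}\|=2c\,\bigl\|\widehat{T}(y^{*}_{\alpha})_{\alpha,U}\bigr\|.
\]
Consequently the injection $\widehat{Y}/\text{ker }\widehat{T}\to\widehat{X}$ induced by $\widehat{T}$ is bounded below by $(2c)^{-1}$; since $\widehat{Y}/\text{ker }\widehat{T}$ is a Banach space, this injection has closed range, and hence so does $\widehat{T}$.

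The only substantive input is the closed range theorem, which is what converts the hypothesis on $T$ into the uniform lower bound $d(\,\cdot\,,\text{ker }T^{*})\le c\|T^{*}\,\cdot\,\|$ on $Y^{*}$; everything else is bookkeeping. The one point requiring a little care is that this lower bound must be transported from $Y^{*}$ to the subspace $\widehat{Y}$ of the ultrapower $(Y^{*})^{U}$, but that is precisely the role of Lemma~\ref{lem2.2}, so I do not anticipate a serious obstacle.
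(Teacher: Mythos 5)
Your proposal is correct and follows essentially the same route as the paper: both pass to $T^{*}$ via the closed range theorem, apply Lemma~\ref{lem2.2} to $N=\text{ker }T^{*}$, and use the inclusion $N'\subset\text{ker }\widehat{T}$ to bound $\widehat{T}$ below modulo its kernel. The only differences are cosmetic — you write the constant as an upper bound $d(y^{*},\text{ker }T^{*})\le c\|T^{*}y^{*}\|$ rather than the paper's infimum, and you spell out the inclusion $N'\subset\text{ker }\widehat{T}$ and the $w^{*}$-closedness of $\text{ker }T^{*}$, which the paper treats as obvious.
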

\begin{proof}
The operator $T$ has closed range so $T^{*}$ also has closed range. Let $\displaystyle c=\inf\{\|T^{*}y^{*}\|:y^{*}\in Y^{*},\;d(y^{*},\text{ker }T^{*})=1
\}>0$. Then by Lemma \ref{lem2.2}, for every $(y_{\alpha}^{*})_{\alpha,U}\in\widehat{Y}$,\[\|\widehat{T}(y_{\alpha}^{*})_{\alpha,U}\|=\lim_{\alpha,U}\|T^{*}y_{\alpha}^{
*}\|\geq c\lim_{\alpha,U}d(y_{\alpha}^{*},\text{ker }T^{*})\geq\frac{c}{2}d((y_{\alpha}^{*})_{\alpha,U},(\text{ker }T^{*})').\]But obviously $(\text{ker }T^{*})'
\subset\text{ker }\widehat{T}$, and so
\[\|\widehat{T}(y_{\alpha}^{*})_{\alpha,U}\|\geq\frac{c}{2}d((y_{\alpha}^{*})_{\alpha,U},\text{ker }\widehat{T}),\quad (y_{\alpha}^{*})_{\alpha,U}\in\widehat{Y}.\]
Hence $\widehat{T}$ has closed range.
\end{proof}
\begin{lemma}\label{lem2.4}
Suppose that $X\subset Y$ and that $T\in B(X)$. Let $T_{0}\in B(X,Y)$, $x\mapsto Tx$. Then $\widehat{T}_{0}\widehat{Y}=\widehat{T}\widehat{X}$.
\end{lemma}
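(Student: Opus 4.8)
The plan is to factor $T_{0}$ through the inclusion map and thereby reduce the statement to the surjectivity of a single ``hat'' operator. Write $\iota\in B(X,Y)$ for the inclusion $x\mapsto x$, so that $T_{0}=\iota T$ and hence $T_{0}^{*}=T^{*}\iota^{*}$. Straight from the definition of the hat construction this gives $\widehat{T}_{0}=\widehat{T}\,\widehat{\iota}$ as maps $\widehat{Y}\to\widehat{X}$ (this is the same one-line computation that yields the anti-multiplicativity in Theorem \ref{thm2.1}). Consequently $\widehat{T}_{0}\widehat{Y}=\widehat{T}\bigl(\widehat{\iota}\widehat{Y}\bigr)$, and since $\widehat{\iota}\widehat{Y}\subset\widehat{X}$ this already yields $\widehat{T}_{0}\widehat{Y}\subset\widehat{T}\widehat{X}$. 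So the whole proof reduces to showing that $\widehat{\iota}\colon\widehat{Y}\to\widehat{X}$ is onto; once we know $\widehat{\iota}\widehat{Y}=\widehat{X}$, applying $\widehat{T}$ to both sides finishes the argument.

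To show $\widehat{\iota}$ is surjective I would take an arbitrary $(x_{\alpha}^{*})_{\alpha,U}\in\widehat{X}$, represented by a bounded net $(x_{\alpha}^{*})_{\alpha\in\Lambda_{0}}$ in $X^{*}$ with $w^{*}\hyphen\lim_{\alpha,U}x_{\alpha}^{*}=0$, and for each $\alpha$ use the Hahn--Banach theorem to pick a norm-preserving extension $y_{\alpha}^{*}\in Y^{*}$ of $x_{\alpha}^{*}$. The net $(y_{\alpha}^{*})$ is then bounded, and $\widehat{\iota}$ sends its class to $(y_{\alpha}^{*}|_{X})_{\alpha,U}=(x_{\alpha}^{*})_{\alpha,U}$ — so the only thing that could go wrong is that $(y_{\alpha}^{*})_{\alpha,U}$ need not lie in $\widehat{Y}$: we control the $w^{*}$-limit of the restrictions to $X$ but not the value of $\langle y_{\alpha}^{*},y\rangle$ for $y\in Y\setminus X$. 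The fix is a correction by the $U$-limit: put $y_{\infty}^{*}:=w^{*}\hyphen\lim_{\alpha,U}y_{\alpha}^{*}$ (this exists by Banach--Alaoglu), observe that for $x\in X$ we have $\langle y_{\infty}^{*},x\rangle=\lim_{\alpha,U}\langle x_{\alpha}^{*},x\rangle=0$ so that $y_{\infty}^{*}\in X^{\perp}$, and replace $y_{\alpha}^{*}$ by $y_{\alpha}^{*}-y_{\infty}^{*}$. The modified net is still bounded, now satisfies $w^{*}\hyphen\lim_{\alpha,U}(y_{\alpha}^{*}-y_{\infty}^{*})=0$ so that its class lies in $\widehat{Y}$, and is unchanged on restriction to $X$ (since $y_{\infty}^{*}|_{X}=0$), so $\widehat{\iota}$ maps it to $(x_{\alpha}^{*})_{\alpha,U}$. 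Hence $\widehat{\iota}$ is onto.

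I expect the main obstacle to be exactly the point just flagged: a pointwise Hahn--Banach extension of a net that is $w^{*}$-null on $X$ is generally not $w^{*}$-null on $Y$, so it does not by itself give a member of $\widehat{Y}$; the idea that resolves this is that its $w^{*}$-limit through $U$ automatically lies in $X^{\perp}$ and is therefore invisible to $\widehat{\iota}$, so subtracting it costs nothing. Everything else — well-definedness and boundedness of the various nets, the factorization $\widehat{T}_{0}=\widehat{T}\widehat{\iota}$, and the inclusion $\widehat{\iota}\widehat{Y}\subset\widehat{X}$ — is routine and can be left to quick verification.
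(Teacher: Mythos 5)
Your proof is correct and follows essentially the same route as the paper's: the forward inclusion is the restriction $y_{\alpha}^{*}\mapsto y_{\alpha}^{*}|_{X}$, and the reverse inclusion is a Hahn--Banach extension corrected by subtracting the $w^{*}$-limit through $U$, which is exactly the paper's computation (the paper verifies $T_{0}^{*}\bigl(w^{*}\hyphen\lim_{\beta,U}y_{\beta}^{*}\bigr)=0$ rather than noting the limit lies in $X^{\perp}$, but that is the same observation). Your repackaging via the factorization $\widehat{T}_{0}=\widehat{T}\widehat{\iota}$ and the surjectivity of $\widehat{\iota}$ is only an organizational difference.
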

\begin{proof}
If $(y_{\alpha}^{*})_{\alpha,U}\in\widehat{Y}$, then for each $\alpha\in\Lambda$, we have $T_{0}^{*}y_{\alpha}^{*}=T^{*}({y_{\alpha}^{*}}_{|X})$, and $({y_{\alpha
}^{*}}_{|X})_{\alpha,U}\in\widehat{X}$.
Thus $\widehat{T}_{0}(y_{\alpha}^{*})_{\alpha,U}=(T_{0}^{*}y_{\alpha}^{*})_{\alpha,U}=(T^{*}({y_{\alpha}^{*}}_{|X}))_{\alpha,U}=
\widehat{T}({y_{\alpha}^{*}}_{|X})_{\alpha,U}\in\widehat{T}\widehat{X}$. Hence $\widehat{T}_{0}\widehat{Y}\subset\widehat{T}\widehat{X}$.

Conversely, if $(x_{\alpha}^{*})_{\alpha,U}\in\widehat{X}$ then we can extend each $x_{\alpha}^{*}$ to an element $y_{\alpha}^{*}\in Y^{*}$ 
such that $\|y_{\alpha}^{*}
\|=\|x_{\alpha}^{*}\|$. 
Thus we have $\displaystyle\left(y_{\alpha}^{*}-w^{*}\hyphen\lim_{\beta,U}y_{\beta}^{*}\right)_{\alpha,U}\in\widehat{Y}$. 
Note that 
\[T_{0}^{*}\left(w^{*}\hyphen\lim_{\beta,U}y_{\beta}^{*}\right)=w^{*}\hyphen\lim_{\beta,U}T_{0}^{*}y_{\beta}^{*}=w^{*}\hyphen\lim_{\beta,U}T^{*}x_{\beta}^{*}=T^{*}
\left(w^{*}\hyphen\lim_{\beta,U}x_{\beta}^{*}\right)=0.\]
This implies that
\begin{eqnarray*}
\widehat{T}(x_{\alpha}^{*})_{\alpha,U}=(T^{*}x_{\alpha}^{*})_{\alpha,U}&=&(T_{0}^{*}y_{\alpha}^{*})_{\alpha,U}\\&=&\left(T_{0}^{*}\left(y_{\alpha}^{*}-w^{*}\hyphen\lim_
{\beta,U}y_{\beta}^{*}\right)\right)_{\alpha,U}\\&=&\widehat{T}_{0}\left(y_{\alpha}^{*}-w^{*}\hyphen\lim_{\beta,U}y_{\beta}^{*}\right)_{\alpha,U}\in\widehat{T}_{0}
\widehat{Y}.
\end{eqnarray*}
Therefore $\widehat{T}\widehat{X}\subset\widehat{T}_{0}\widehat{Y}$.
\end{proof}
\begin{proposition}\label{pro2.5}
Suppose that $T\in B(X)$ and that there exists an essentially maximal subspace $M$ of $X$ on which $T$ is compact. Then $\widehat{T}$ has closed range.
\end{proposition}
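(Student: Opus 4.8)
The plan is to perturb $T$ by a compact operator so that it vanishes on $M$, to show that the perturbed operator has closed range — this is where the essential maximality of $M$ enters — and then to invoke Proposition~\ref{pro2.3}.

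First I would reduce to the case that $M$ is closed: replacing $M$ by $\overline{M}$ changes nothing, since the unit ball of $\overline{M}$ is carried by $T$ into $\overline{T(B_{M})}$, so $T$ remains compact on $\overline{M}$, and $\overline{M}$ remains essentially maximal. Since $T|_{M}:M\to X$ is compact, I would extend it to a compact operator $K\in K(X)$ with $K|_{M}=T|_{M}$; this uses the classical fact that a compact operator from a closed subspace of a Banach space extends to a compact operator on the whole space (e.g.\ write $T|_{M}=BA$ with $A:M\to c_{0}$ bounded and $B:c_{0}\to X$ compact, extend $A$ to $\bar{A}:X\to\ell_{\infty}$, and note that $B^{**}:\ell_{\infty}\to X$ is compact, so $K:=B^{**}\bar{A}$ works). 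Then $(T-K)|_{M}=0$, so $M\subseteq\text{ker}(T-K)$; and on $\text{ker}(T-K)$ the operator $T$ agrees with $K$, hence is compact there, so the essential maximality of $M$ forces $\text{dim}\big(\text{ker}(T-K)/M\big)<\infty$.

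The heart of the matter, which I expect to be the main obstacle, is to prove that $T-K$ has closed range. I would argue by contradiction. If not, then the injectivization $\widetilde{T-K}:X/\text{ker}(T-K)\to X$ is injective but not bounded below, so there are $\bar{z}_{n}$ with $\|\bar{z}_{n}\|=1$ and $\|\widetilde{T-K}\,\bar{z}_{n}\|\to0$; no subsequence of $(\bar{z}_{n})$ converges in norm, as a limit would lie in $\text{ker}\,\widetilde{T-K}=\{0\}$. By Rosenthal's $\ell_{1}$-theorem and the Bessaga--Pe\l czy\'nski selection principle — in the weakly convergent case the weak limit is $0$ by injectivity, so $(\bar{z}_{n})$ is already weakly null; in the weakly Cauchy but non-convergent case one passes to a separated subsequence and then to the sequence of consecutive differences — I would extract a seminormalized \emph{basic} sequence $(\bar{u}_{k})$ in $X/\text{ker}(T-K)$ with $\|\widetilde{T-K}\,\bar{u}_{k}\|\to0$, and lift it by a standard gliding-hump argument to a basic sequence $(u_{k})$ in $X$ for which $\text{ker}(T-K)\oplus\overline{\text{span}}\{u_{k}\}$ is a topologically direct sum and $(T-K)u_{k}\to0$. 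As $(u_{k})$ is bounded and $K$ is compact, after passing to a further subsequence $Tu_{k}=(T-K)u_{k}+Ku_{k}$ converges to some $y$, quickly enough that $Tu_{k}=y+e_{k}$ with $\sum_{k}\|e_{k}\|<\infty$; then for $v=\sum_{k}a_{k}u_{k}$ one has $Tv=\big(\sum_{k}a_{k}\big)y+\sum_{k}a_{k}e_{k}$, where the second summand defines a norm-convergent series of rank-one operators and the first a rank-$\le1$ operator (the functional $v\mapsto\sum_{k}a_{k}$ is bounded, being up to a scalar equal to $Tv$ minus the second summand). Hence $T$ is compact on $\overline{\text{span}}\{u_{k}\}$, and therefore, the sum with $\text{ker}(T-K)$ being topologically direct, also on $W:=\text{ker}(T-K)\oplus\overline{\text{span}}\{u_{k}\}$. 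But $W\supseteq M$ with $\text{dim}(W/M)=\infty$, contradicting the essential maximality of $M$.

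Granting that $T-K$ has closed range, Proposition~\ref{pro2.3} yields that $\widehat{T-K}$ has closed range; since $K$ is compact, $\widehat{K}=0$, and by additivity of $\,\widehat{\cdot}\,$ we get $\widehat{T}=\widehat{T-K}$, so $\widehat{T}$ has closed range. The three points a full proof must handle with care are the compact extension $K$, the extraction and lifting of the basic sequence so that $W$ is genuinely a topologically direct, infinite-codimensional enlargement of $M$, and the implication that summable convergence of $(Tu_{k})$ forces $T$ to be compact on $\overline{\text{span}}\{u_{k}\}$.
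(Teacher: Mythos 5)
The decisive gap is your construction of a compact operator $K\in K(X)$ with $K|_{M}=T|_{M}$. You invoke ``the classical fact that a compact operator from a closed subspace of a Banach space extends to a compact operator on the whole space,'' but there is no such fact: compact operators from subspaces extend (compactly, or even boundedly) only under hypotheses on the \emph{target} space. Lindenstrauss's theorem \cite[Theorem 3.3]{Lindenstrauss} gives the extension when the target is injective, e.g.\ $\ell_{\infty}(J)$, and in general even a bounded extension can fail to exist (a compact non--Hilbert--Schmidt diagonal operator on $\ell_{2}$, viewed as a map from an isometric copy of $\ell_{2}$ inside $\ell_{\infty}$ back into $\ell_{2}$, has no bounded extension to $\ell_{\infty}$, since every operator from $\ell_{\infty}$ to $\ell_{2}$ is $2$-summing). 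In your situation $T|_{M}$ does at least extend boundedly, namely to $T$ itself, but whether it extends \emph{compactly} into $X$ is exactly the point at issue, and your proposed construction does not settle it: the standard factorization theorem puts a compact operator compactly through a closed \emph{subspace} of $c_{0}$, and producing a second factor that is compact and defined on all of $c_{0}$ is again an unsolved extension problem; worse, since a compact $B\colon c_{0}\to X$ is a norm limit of finite-rank operators, a factorization $T|_{M}=BA$ of the kind you posit would force $T|_{M}$ to be approximable, which can fail when $X$ lacks the approximation property (the proposition assumes nothing about $X$, and the theorem it serves assumes only the BCAP). This is precisely why the paper does not perturb inside $B(X)$: it regards $X$ as a subspace of $Y=\ell_{\infty}(J)$, applies Lindenstrauss's theorem to the injective target to obtain $K\in K(X,\ell_{\infty}(J))$ with $K|_{M}={T_{0}}|_{M}$, shows $T_{0}-K$ has closed range, and then needs Lemma \ref{lem2.4} --- which your argument omits entirely --- to transfer the closed-range conclusion from $\widehat{T}_{0}\in B(\widehat{Y},\widehat{X})$ back to $\widehat{T}\in B(\widehat{X})$.

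The rest of your argument is sound in outline and coincides with the paper's: $\text{ker}(T_{0}-K)$ is essentially maximal among subspaces on which the operator is compact, and failure of closed range would yield an infinite-dimensional subspace of the quotient on which the injectivization is compact, contradicting essential maximality; the finish via Proposition \ref{pro2.3} and $\widehat{K}=0$ is the same. Your basic-sequence extraction, lifting, and gliding hump amount to a hand-made proof of the one fact the paper simply cites, \cite[Proposition 2.c.4]{LinTza}; note also that lifting a basic sequence from $X/\text{ker}(T_{0}-K)$ so that its closed span is in topological direct sum with the kernel is itself a nontrivial step your sketch does not secure (a bounded lift $u_{k}$ of $\bar{u}_{k}$ need not satisfy $\|\sum a_{k}u_{k}\|\leq C\|\sum a_{k}\bar{u}_{k}\|$). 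The efficient repair is to cite \cite[Proposition 2.c.4]{LinTza} for that step and to restore the $\ell_{\infty}(J)$ embedding together with Lemma \ref{lem2.4}.
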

\begin{proof}
Without loss of generality, we may assume that $X$ is a subspace of $Y=\ell_{\infty}(J)$ for some set $J$. Define $T_{0}\in B(X,\ell_{\infty}(J))$, $x\mapsto Tx$. Then by
assumption, there is an essentially maximal subspace $M$ of $X$ on which $T_{0}$ is compact. By \cite[Theorem 3.3]{Lindenstrauss}, there exists $K\in K(X,
\ell_{\infty}(J))$ such that ${K}_{|M}={T_{0}}_{|M}$.

We now show that $T_{0}-K\in B(X,\ell_{\infty}(J))$ has closed range. Since $M\subset\text{ker}(T_{0}-K)$ and $M$ is an essentially maximal subspace of $X$ on which
$T_{0}-K$ is compact, $\text{ker}(T_{0}-K)$ is an essentially maximal subspace of $X$ on which $T_{0}-K$ is compact.

Let $\pi$ be the quotient map from $X$ onto $X/\text{ker}(T_{0}-K)$. Define the (one-to-one) operator $R:X/\text{ker}(T_{0}-K)\to \ell_{\infty}(J)$, $\pi x\mapsto(T_{0}-K)
x$. If $R$ does not have closed range, then by \cite[Proposition 2.c.4]{LinTza}, $R$ is compact on an infinite dimensional subspace $V$ of $X/\text{ker}(T_{0}-K)$.
Hence, $T_{0}-K$ is compact on $\pi^{-1}V$ and so by the essential maximality of $\text{ker}(T_{0}-K)$, we have $\text{dim }\pi^{-1}V/\text{ker}(T_{0}-K)<\infty$. Thus,
$V=\pi^{-1}V/\text{ker}(T_{0}-K)$ is finite dimensional, which contradicts the definition of $V$.

Therefore, $R$ has closed range and so $T_{0}-K$ also has closed range. By Proposition \ref{pro2.3}, $\widehat{T_{0}-K}$ has closed range. But $\widehat{K}=0$ so
$\widehat{T}_{0}$ has closed range and by Lemma \ref{lem2.4}, $\widehat{T}$ has closed range.
\end{proof}
\begin{lemma}\label{lem2.6}
Suppose that $P\in B(X)$ and that $\dot{P}$ is an idempotent in $B(X)/K(X)$. Then $P$ is the sum of an idempotent in $B(X)$ and a compact operator on $X$.
\end{lemma}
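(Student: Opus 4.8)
The plan is to produce the idempotent as a Riesz spectral projection of $P$. Compactness of $P^{2}-P$ confines $\sigma(P)$ to a neighbourhood of $\{0,1\}$, and the spectral projection that isolates the part of $\sigma(P)$ near $1$ will turn out to differ from $P$ by a compact operator. Throughout, write $K:=P^{2}-P$, which is compact because $\dot P^{2}=\dot P$, and note that $K$ is a polynomial in $P$, so it commutes with every operator that commutes with $P$. If $X$ is real, complexify $X$ and $P$ first; since $\sigma(P)$ and all the sets used below are invariant under complex conjugation, the projection produced at the end is the complexification of a real operator, so we may assume $X$ is complex.

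Since $\dot P$ is an idempotent in the Banach algebra $B(X)/K(X)$, its spectrum is contained in $\{0,1\}$. By the spectral mapping theorem, $\{\lambda^{2}-\lambda:\lambda\in\sigma(P)\}=\sigma(P^{2}-P)=\sigma(K)$, and $\sigma(K)$ is countable because $K$ is compact; as each point has finite preimage under $z\mapsto z^{2}-z$, it follows that $\sigma(P)$ is countable, so there is an $r\in(0,1/2)$ for which the circle $\{|z-1|=r\}$ is disjoint from $\sigma(P)$. Put $\sigma_{1}:=\sigma(P)\cap\{|z-1|\le r\}$; since the bounding circle misses the closed set $\sigma(P)$, $\sigma_{1}$ is a clopen subset of $\sigma(P)$. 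Let $E\in B(X)$ be the Riesz spectral projection of $P$ associated with $\sigma_{1}$. Then $E^{2}=E$, $E$ commutes with $P$ and with $K$, and $X=\text{ran}(E)\oplus\text{ker}(E)$ with both summands closed and $P$-invariant, with $\sigma(P|_{\text{ran}(E)})=\sigma_{1}\subset\{|z-1|<1/2\}$ and $\sigma(P|_{\text{ker}(E)})=\sigma(P)\setminus\sigma_{1}\subset\{|z-1|\ge r\}$. In particular $P|_{\text{ran}(E)}$ is invertible (because $0\notin\sigma_{1}$) and $(P-I)|_{\text{ker}(E)}$ is invertible (because $1\notin\sigma(P)\setminus\sigma_{1}$).

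Because $E$ commutes with $K$, the compact operator $K$ carries each of the closed subspaces $\text{ran}(E)$ and $\text{ker}(E)$ into itself, and the restrictions $K|_{\text{ran}(E)}$ and $K|_{\text{ker}(E)}$ are again compact. Using $P(P-I)=K$, one gets on $\text{ran}(E)$ that $(P-I)|_{\text{ran}(E)}=(P|_{\text{ran}(E)})^{-1}K|_{\text{ran}(E)}$, which is compact, and on $\text{ker}(E)$ that $P|_{\text{ker}(E)}=K|_{\text{ker}(E)}\bigl((P-I)|_{\text{ker}(E)}\bigr)^{-1}$, which is compact. Hence $(P-I)E$ and $P(I-E)$ are compact operators on $X$, so
\[P-E=(PE-E)+P(I-E)=(P-I)E+P(I-E)\]
is compact, and $P=E+(P-E)$ exhibits $P$ as the sum of the idempotent $E\in B(X)$ and a compact operator, as required.

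I expect the one genuinely delicate point to be the spectral step: verifying that $\sigma(P)\setminus\{0,1\}$ is small enough to allow a separating circle around $1$ (here mere countability suffices, though discreteness with accumulation only at $\{0,1\}$ is also available). This is exactly where compactness of $K$ is used, via the spectral mapping theorem and the structure of the spectrum of a compact operator — alternatively via Fredholm theory, since $\lambda-P$ is Fredholm for every $\lambda\notin\{0,1\}$. The reduction from real to complex scalars is routine, since $\sigma_{1}$ and the contour defining $E$ can be chosen invariant under complex conjugation.
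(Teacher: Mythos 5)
Your proof is correct and follows essentially the same route as the paper: use Fredholm/compactness considerations to confine the cluster points of $\sigma(P)$ to $\{0,1\}$, take the Riesz spectral projection $E$ of $P$ associated with the part of the spectrum near $1$, and show $P-E$ is compact, handling real scalars by complexification with a conjugation-symmetric contour. The only cosmetic differences are that the paper verifies $P-E\in K(X)$ by evaluating the same contour integral in the Calkin algebra (where it equals $\dot P$) instead of your direct block computation on $\text{ran}(E)\oplus\text{ker}(E)$ using $P(P-I)=K$, and in the real case it integrates over a symmetric rectangle, citing Edelstein--Wojtaszczyk for the descent of the idempotent to a real operator.
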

\begin{proof}
We first treat the case where the scalar field is $\mathbb{C}$. From Fredholm theory (see e.g. \cite[Chapters XI and XVII]{Gohberg}), we know that since $\sigma(\dot{P})
\subset\{0,1\}$, the only possible cluster points of $\sigma(P)$ are 0 and 1. Thus, there exists $0<r<1$ such that $\{z\in\mathbb{C}:|z-1|=r\}\cap\sigma(P)=\emptyset$.
Then $\dot{P}=\frac{1}{2\pi i}\oint_{|z-1|=r}(z\dot{I}-\dot{P})^{-1}\,dz$ and so $P-\frac{1}{2\pi i}\oint_{|z-1|=r}(zI-P)^{-1}\,dz\in K(X)$. But $\frac{1}{2\pi i}\oint_{
|z-1|=r}(zI-P)^{-1}\,dz$ is an idempotent in $B(X)$ (see e.g. \cite[Theorem 2.7]{Radjavi}). This completes the proof in the complex case.

If $X$ is a real Banach space, then let $X_{C}$ and $P_{C}$ be the complexifications (see \cite[page 266]{Edelstein}) of $X$ and $P$, respectively. Thus, $\dot{P}_{C}$
is an idempotent in $B(X_{C})/K(X_{C})$. Since the only possible cluster points of $\sigma(P_{C})$ are 0 and 1, there exists a closed rectangle $R$ in the complex plane
symmetric with respect to the real axis such that 1 is in the interior of $R$, 0 is in the exterior of $R$, and $\sigma(P_{C})$ is disjoint from the boundary $\partial R$
of $R$. By \cite[Lemma 3.4]{Edelstein}, the idempotent $\frac{1}{2\pi i}\oint_{\partial R}(zI-P_{C})^{-1}\,dz$ in $B(X_{C})$ is induced by an idempotent $P_{0}$ in $B(X)$.
Since $P_{C}-\frac{1}{2\pi i}\oint_{\partial R}(zI-P_{C})^{-1}\,dz\in K(X_{C})$, we see that $P-P_{0}\in K(X)$.
\end{proof}
\begin{proof}[Proof of Theorem \ref{thm1.3}]
``(1)$\Rightarrow$(2)'':  Let $\displaystyle\dot{P}:=\lim_{n\to\infty}\frac{\dot{I}+\dot{T}+\ldots+\dot{T}^{n}}{n+1}$.\\Since
$\displaystyle\lim_{n\to\infty}\frac{\|\dot{T}^{n}\|}{n}=0$,
\begin{equation}\label{eq1}
(\dot{I}-\dot{T})\dot{P}=\lim_{n\to\infty}(\dot{I}-\dot{T})\frac{\dot{I}+\dot{T}+\ldots+\dot{T}^{n}}{n+1}=\lim_{n\to\infty}\frac{\dot{I}-\dot{T}^{n+1}}{n+1}=0.
\end{equation}
Thus $\dot{T}\dot{P}=\dot{P}$, and so
\[\dot{P}^{2}=\lim_{n\to\infty}\frac{\dot{P}+\dot{T}\dot{P}+\ldots+\dot{T}^{n}\dot{P}}{n+1}=\lim_{n\to\infty}\frac{(n+1)\dot{P}}{n+1}=\dot{P}.\]
Hence $\dot{P}$ is an idempotent in $B(X)/K(X)$. By Lemma \ref{lem2.6}, there exists an idempotent $P_{0}$ in $B(X)$ such that $P-P_{0}\in K(X)$. Replacing $P$ with
$P_{0}$, we can assume without loss of generality that $P$ is an idempotent in $B(X)$. Equation (\ref{eq1}) also implies that $(I-T)P\in K(X)$, which means that $I-T$ is
compact on $PX$. Hence $(I-T)^{m}$ is compact on $PX$.

We now show that $PX$ is an essentially maximal subspace of $X$ on which $(I-T)^{m}$ is compact. Suppose that $(I-T)^{m}$ is compact on a subspace $M_{0}$ of $X$
containing $PX$. Let\[f_{n}(z):=\frac{n+(n-1)z+(n-2)z^{2}+\ldots+z^{n-1}}{n+1},\quad z\in\mathbb{C},\,n\geq 1.\]
Note that $\dot{I}-\frac{\dot{I}+\dot{T}+\ldots+\dot{T}^{n}}{n+1}=(\dot{I}-\dot{T})f_{n}(\dot{T})$. Therefore,
\[\dot{I}-\dot{P}=(\dot{I}-\dot{P})^{m}=\lim_{n\to\infty}f_{n}(\dot{T})^{m}(\dot{I}-\dot{T})^{m},\]
and so
\[\lim_{n\to\infty}\|(I-P)-(f_{n}(T)^{m}(I-T)^{m}+K_{n})\|=0,\]
for some $K_{1},K_{2},\ldots\in K(X)$.

Since $(I-T)^{m}$ is compact on $M_{0}$, the operator $f_{n}(T)^{m}(I-T)^{m}$ is compact on $M_{0}$ and so is $f_{n}(T)^{m}(I-T)^{m}+K_{n}$ on $M_{0}$. Thus
${(I-P)}_{|M_{0}}$ is the norm limit of a sequence in $K(M_{0},X)$, and so $I-P$ is compact on $M_{0}$. Since $PX\subset M_{0}$, we have that $(I-P)M_{0}\subset M_{0}$.
Therefore, ${(I-P)}_{|(I-P)M_{0}}=I_{|(I-P)M_{0}}$ is compact, and so $(I-P)M_{0}$ is finite dimensional. In other words, $\text{dim }M_{0}/PX<\infty$.

``(2)$\Rightarrow$(1)'':  By Proposition \ref{pro2.5}, $\widehat{(I-T)^{m}}=(I-\widehat{T})^{m}$ has closed range. Since by assumption $\displaystyle\lim_{n\to\infty}
\frac{\|T^{n}\|_{e}}{n}=0$, $\displaystyle\lim_{n\to\infty}\frac{\|\widehat{T}^{n}\|}{n}=\lim_{n\to\infty}\frac{\|\widehat{T^{n}}\|}{n}=0$. By Mbekhta-Zem\'anek's Theorem
\ref{thm1.2}, the sequence $(M_{n}(\widehat{T}))_{n}$ converges in norm to an element in $B(\widehat{X})$. By Theorem \ref{thm2.1}, the result follows.
\end{proof}
\bibliographystyle{amsplain}

\end{document}